\documentclass[11pt]{article}
\usepackage{fullpage, amssymb, amsthm, amsmath, bm, verbatim}
\usepackage{hyperref}
\usepackage{cancel,cleveref,makecell}
\usepackage{caption}
\newtheorem{thm}{Theorem}
\newtheorem{thmA}{Theorem}
\newtheorem{lem}[thm]{Lemma}
\newtheorem{lemA}[thmA]{Lemma}

\newtheorem{conj}[thm]{Conjecture}

\newtheorem{ques}[thm]{Question}
\newtheorem*{mainthm}{Main Theorem}
\newtheorem*{keylem}{Key Lemma}
\theoremstyle{definition}

\newtheorem{example}{Example}

\newcommand\vph{\varphi}

\RequirePackage{marginnote,hyperref}
\newcommand\chisl{\chi^{\star}_{\ell}}
\newcommand\chil{\chi_{\ell}}
\newcommand\chisc{\chi^{\star}_{c}}
\newcommand\chic{\chi_{c}}

\renewcommand\ge{\geqslant}

\renewcommand\leq{\leqslant}
\renewcommand\le{\leqslant}

\newcommand\B{\mathcal{B}}
\newcommand\C{\mathcal{C}}
\newcommand\F{\mathcal{F}}
\newcommand\G{\mathcal{G}}
\newcommand\K{\mathcal{K}}

\newcommand\MM{\textbf{M}}

\usepackage{tikz}
\usetikzlibrary{positioning}
\usetikzlibrary{decorations.pathmorphing}

\title{Disjoint Correspondence Colorings\\ for $K_5$-Minor-free Graphs}
\author{
	Wouter Cames van Batenburg\thanks{ D\'epartement d’Informatique, Universit\'e libre de Bruxelles, Belgium; \texttt{w.p.s.camesvanbatenburg@gmail.com}. Supported by the Belgian National Fund
for Scientific Research (FNRS).}
\and
	Daniel W. Cranston\thanks{Department of Mathematics, College of William \& Mary, Williamsburg, VA, USA;
	\texttt{dcransto@gmail.com}.}
\and 
	Franti\v{s}ek Kardo\v{s}
	\thanks{LaBRI, CNRS, University of Bordeaux, Talence, F-33405, France; Department of Computer Science, Faculty of Mathematics, Physics and Informatics, Comenius University, Mlynsk\'{a} Dolina, 842 48 Bratislava, Slovakia; \texttt{frantisek.kardos@u-bordeaux.fr}.}
}

\begin{document}
\maketitle
\begin{abstract}
        Thomassen famously proved that every planar graph is 5-choosable.  We explore variants of this 
	result, focusing on finding disjoint correspondence colorings, in the more general class of 
	$K_5$-minor-free graphs.  Correspondence colorings generalize list colorings as follows.
	Given a graph $G$ and a positive integer $t$, a correspondence $t$-cover $\MM$ assigns 
	to each $v\in V(G)$ a set of allowable colors $\{1_v,\ldots,t_v\}$ and to each edge $vw\in E(G)$
	a matching between $\{1_v,\ldots,t_v\}$ and $\{1_w,\ldots,t_w\}$. An $\MM$-coloring $\vph$
	picks for each vertex $v$ a color $\vph(v)$ (from the set $\{1_v,\ldots,t_v\}$) such that for
	each edge $vw\in E(G)$ the colors $\vph(v),\vph(w)$ are not matched to each other.
	Two $\MM$-colorings $\vph_1,\vph_2$ of $G$ are \emph{disjoint} if $\vph_1(v)\ne\vph_2(v)$
	for all $v\in V(G)$.  For every $K_5$-minor-free graph $G$ and every correspondence 6-cover 
	$\MM$ of $G$, we construct 3 pairwise disjoint $\MM$-colorings $\vph_1,\vph_2,\vph_3$.
	In contrast, we provide examples of $K_5$-minor-free graphs and correspondence 5-covers $\MM$ 
	that do not admit 3 disjoint $\MM$-colorings.
\end{abstract}

\section{Introduction}
\subsection{Definitions and Examples}
Correspondence coloring generalizes list coloring, and was introduced to resolve a longstanding open 
question \cite{DP} on the list-chromatic number of certain planar graphs.  Recall that a list assignment 
$L$ for a graph $G$ gives to each vertex $v$ of $G$ a set $L(v)$ of allowable colors.  An \emph{$L$-coloring} 
is a proper coloring $\vph$ such that $\vph(v)\in L(v)$ for all vertices $v$.  If $|L(v)|=t$ for all $v$, 
then $L$ is called a $t$-assignment and an \emph{$L$-packing}
consists of $L$-colorings $\vph_1,\ldots,\vph_t$ such that $\vph_i(v)\ne \vph_j(v)$ for all vertices $v$
and all distinct $i,j\in[t]$.  That is, the colors $\vph_i(v)$ partition $L(v)$ for each vertex $v$.

	For a graph $G$, a \emph{correspondence $t$-cover} $\MM$ assigns 
	to each $v\in V(G)$ a set of allowable colors $\{1_v,\ldots,t_v\}$ and to each edge $vw\in E(G)$
	a matching between $\{1_v,\ldots,t_v\}$ and $\{1_w,\ldots,t_w\}$. An $\MM$-coloring $\vph$
	picks for each vertex $v$ a color $\vph(v)$ (from the set $\{1_v,\ldots,t_v\}$) such that for
	each edge $vw\in E(G)$ the colors $\vph(v),\vph(w)$ are not matched to each other.
	(It is straightforward to verify that correspondence coloring generalizes list coloring.)
	Two $\MM$-colorings $\vph_1,\vph_2$ of $G$ are \emph{disjoint} if $\vph_1(v)\ne\vph_2(v)$ for all 
	$v\in V(G)$.  An $\MM$-packing for $G$ consists of disjoint $\MM$-colorings $\vph_1,\ldots, \vph_t$.

	The \emph{list-chromatic number}, $\chil(G)$, and \emph{list packing number},
	$\chisl(G)$, are the minimum values of $t$ such that for every $t$-assignment $G$ admits,
	respectively, a list coloring and a list packing.  Analogously, for correspondence $t$-covers, we 
	define the \emph{correspondence chromatic number}, $\chic(G)$, and the \emph{correspondence packing 
	number}, $\chisc(G)$. List packing and correspondence packing were introduced in~\cite{CCvBDK}, where
	the authors discussed connections with other areas of discrete math.  These parameters have since 
	been studied in numerous papers~\cite{BCK25,CCvB,CCvBDK2,CCvBZ,CH23,CDKH24,CSr,KMMP22,KM25,M23}.

	As a warmup with these definitions, we provide a few examples.  Note that always $\chi(G)\le \chil(G)
	\le \chic(G)\le \chisc(G)$; and also $\chil(G)\le \chisl(G)\le \chisc(G)$.  For a graph class $\G$
	and a graph parameter $f$, let $f(\G):=\max_{G\in \G}f(G)$.  Let $\F$ denote the class of forests,
	$\C$ the class of cycles, and $\C_e$ the class of cycles of even length.
	So $2=\chi(\F)=\chil(\F)=\chisl(\F)=\chic(\F)=\chisc(\F)$.  The lower bound is trivial, and the 
	upper bound follows from \Cref{ext-lem} below.  It is well-known (and easy to check) that 
	$\chil(\C_e) = 2 < 3 = \chil(\C)$.  But the reader may find the following more surpising:
	$\chisl(\C_e)=\chisl(\C)=3$, $\chic(\C_e)=\chic(\C)=3$, and $\chisc(\C_e)=\chisc(\C)=4$. 
	In the first case, the upper bound requires a bit of work.  But in the second and third cases, 
	the upper bounds hold because the graphs are $2$-degenerate; in 
	particular, the last of these follows from \Cref{ext-lem} below.  The lower bounds on both
	$\chic(\C_e)$ and $\chisc(\C_e)$ come from constructing a cover that violates a necessary
	parity-type invariant needed to find the desired coloring or packing.
	For more details on these lower bounds, see \cite{CCvBDK2} or the introduction of~\cite{CSr}.

\subsection{Main Result}
The authors of \cite{CCvBDK} proposed the problem of determining $\chisc(\G)$ for various graph classes 
$\G$.  Let $\K_r$ denote the class of all graphs that are $K_r$-minor-free.  It is easy to check that 
$\chisc(\K_2)=1$, $\chisc(\K_3)=2$, and $\chisc(\K_4)=4$; the final upper bound follows from Lemma~\ref{ext-lem}
below because all graphs in $\K_4$ are 2-degenerate.  So the first open case is $\chisc(\K_5)$.
The best known upper bound is $\chisc(\K_5)\le 8$.  This follows from \cite[Theorem~1.7]{CCvBZ} because every
$K_5$-minor-free graph has maximum average degree less than 6.  We cautiously believe that this upper bound 
can be improved as follows.

\begin{conj}\label{conj1}
	If $G$ is $K_5$-minor-free and $\MM$ is a correspondence 6-cover, then $G$ has 6 disjoint 
	$\MM$-colorings.  That is, $\chisc(\K_5)=6$.
\end{conj}

Dvo\v{r}\'{a}k, Norin, and Postle~\cite{DNP19} called a graph \emph{weighted $\epsilon$-flexible} with 
respect to a list-assignment $L$ if there exists a probability distribution on the $L$-colorings $\phi$ 
of $G$ such that $\mathbb{P}(\phi(v)=c) \ge \epsilon$ for every vertex $v$ and color $c\in L(v)$.
In other words, weighted $\epsilon$-flexibility ensures that at each vertex each color has a decent chance 
to appear in the random list-coloring.  Dvo\v{r}\'{a}k, Norin, and Postle  proved that there is an absolute 
constant $\epsilon>0$ such that all planar graphs are weighted $\epsilon$-flexible with respect to every 
$7$-assignment, and they posed as an open problem to show the same for $6$-assignments or even 
$5$-assignments.  While this problem remains wide open, for $7$-assignments the value of $\epsilon$ has been 
improved considerably from $7^{-36}$ to $2^{-7}$ by~\cite{KMMP22} and~\cite{CCvB}.
As detailed in~\cite{CCvBZ} and~\cite{KMMP22}, every graph $G$ is weighted $\frac{1}{\chisl(G)}$-flexible 
with respect to every $\chisl(G)$-assignment. Thus a proof of Conjecture~\ref{conj1}  will make significant 
progress: it will imply that every planar graph (which is $K_5$-minor-free) is weighted 
$\frac{1}{6}$-flexible with respect to every $6$-assignment.
\bigskip

The main goal of this note is to prove the following result, in support of the above conjecture.

\begin{mainthm}
	\label{thm1}
	Let $G$ be a $K_5$-minor-free graph.  (i) For every correspondence 6-cover $\MM$ of $G$, 
	there exist disjoint $\MM$-colorings $\vph_1,\vph_2,\vph_3$.  That is, for all $v\in V(G)$ 
	the colors $\vph_1(v),\vph_2(v),\vph_3(v)$ are pairwise distinct.
	(ii) But the analogous statement is false if we replace 6-cover with 5-cover.
\end{mainthm}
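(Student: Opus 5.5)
For (i), I will argue by minimal counterexample, using Wagner's structure theorem for $K_5$-minor-free graphs: every edge-maximal $K_5$-minor-free graph is built by clique-sums (over $K_0,K_1,K_2,K_3$) from planar triangulations and copies of the Wagner graph $V_8$. The plan is to prove a stronger, extendable statement so that the clique-sums go through. I will use \Cref{ext-lem}, which I take to say that a vertex of degree $d$ whose neighbours are already coloured by $k$ disjoint colourings can be coloured compatibly whenever $t \ge k + d$ or so. Concretely, I aim to prove the following. Let $\MM$ be a correspondence 6-cover of a $K_5$-minor-free graph $G$, and let $Q$ be a clique of $G$ on at most $3$ vertices. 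Then any 3 disjoint $\MM$-colourings of $G[Q]$ extend to 3 disjoint $\MM$-colourings of $G$. With this hypothesis, clique-sums are immediate: colour one side, then extend across the separating clique. So it suffices to prove the claim for the pieces, namely planar triangulations with a precoloured triangle, and $V_8$.

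For $V_8$ (cubic, 8 vertices), the case is easy. After precolouring a clique of size at most $2$, we colour the remaining vertices greedily in a degeneracy order. A vertex with $d$ coloured neighbours faces at most $3d \le 9$ forbidden (colouring, colour) incidences among 6 colours. So I would instead choose the 3-subset at each vertex via Hall's theorem on the bipartite graph between $\{\vph_1,\vph_2,\vph_3\}$ and the 6 colours. Each $\vph_i$ loses at most $d$ colours, leaving at least $6 - d \ge 3$ when $d \le 3$. A system of distinct representatives exists because each side has degree at least $3$ among 6 colours and there are only 3 colourings. This is exactly the \Cref{ext-lem} mechanism: with $d \le 3$ and $t = 6$ it gives 3 disjoint colours. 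The last vertex has $d = 3$ and still works.

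For planar graphs, I plan a Thomassen-style induction on near-triangulations with outer face $C$. I strengthen the statement as follows: two adjacent outer vertices are precoloured (3 disjoint colours each), other outer vertices have a reduced ``budget'', and inner vertices have full 6-covers. The analogue of Thomassen's ``3 vs 5'' becomes a count of how many colours must remain available. An outer vertex must retain enough correspondence colours that, after its two outer neighbours are fixed, each $\vph_i$ still has a choice, and Hall's condition still holds. The proof would follow Thomassen's two cases. In the first, a chord splits the graph, and we apply induction to both sides. In the second, we remove the outer vertex $v_k$ adjacent to the precoloured $v_1$, reserve for its interior neighbours the colours matched to carefully chosen triples at $v_k$, and apply induction. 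The interior case is the crux. Instead of removing two colours as in Thomassen, we must reserve, for each of the three colourings, a colour at $v_k$ and delete its matched colour for that specific colouring at each neighbour. This forces the invariant to track lists per colouring, i.e.\ a ``3-tuple of lists'' with a Hall-type condition, rather than a single list. Setting up this invariant so that both cases close is the step I expect to be the main obstacle. I would first try this: outer non-precoloured vertices carry lists of size $4$ per colouring, with a Hall condition guaranteeing that disjoint choices exist, while inner vertices carry the full 6.

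For (ii), I will construct an explicit small $K_5$-minor-free (indeed planar) graph with a 5-cover admitting no 3 disjoint colourings. I would start from $K_4$ or the octahedron/$K_{2,2,2}$ and use twisted matchings (cyclic shifts modulo 5 on a cycle) so that a parity/counting invariant fails. For each vertex, three of the five colours must be used, and around a triangle or wheel the shifts make this impossible. I would verify the example by a case analysis or a counting argument on the permutations. This is possibly what the authors do with a small wheel.
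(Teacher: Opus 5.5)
\paragraph{Part (i).} Your architecture matches the paper's. You reduce to edge-maximal graphs, use Wagner's decomposition with a precoloured-clique extension statement so that 2- and 3-sums glue, handle $M_8$ by the Hall argument of \Cref{ext-lem} (it is cubic), and run a Thomassen-style induction on near-triangulations. In that induction two outer vertices are precoloured, the other outer vertices have four colours per colouring, and inner vertices have full lists.

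However, the step you flag as the main obstacle is the entire content of the planar case, and what you sketch for it fails. As stated, your invariant (``lists of size 4 per colouring with a Hall condition'') is vacuous, since three $4$-sets always have a system of distinct representatives. The resulting statement is false already for the triangle. Take $L_1(w_3)=\{1,2,3,4\}$, $L_2(w_3)=\{1,2,3,5\}$, $L_3(w_3)=\{1,2,4,5\}$, and matchings such that $w_1$ blocks $3,5,4$ and $w_2$ blocks $4,3,5$ for $\vph_1,\vph_2,\vph_3$ respectively. Each blocked triple only needs to be injective, so this is realizable, and then all three colourings are left with $\{1,2\}$.

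Reserving a single colour per colouring at $v_k$ also cannot work, because $v_{k-1}$ is coloured in $G-v_k$ and may block it. As in Thomassen's proof you must reserve two colours per colouring. You then still need (a) that whatever $v_{k-1}$ blocks, the surviving reserved colours give three distinct colours at $v_k$, and (b) that the lists $[6]\setminus\MM_{v_kv}(R_j)$ at the newly exposed outer vertices satisfy the invariant again.

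The paper achieves all of this with the rigid condition (2): the three lists at an outer vertex are the three pairwise unions of a partition of $[6]$ into pairs. In particular $L_1\cap L_2\cap L_3=\emptyset$, which is exactly what the base case needs. One then reserves a partition $R_1\uplus R_2\uplus R_3=[6]$ with $|R_j|=2$, $R_j\subseteq L_j(w_n)$ and $\MM_{w_1w_n}(\vph_j(w_1))\notin R_j$. Disjointness of the $R_j$ makes the final choices at $w_n$ automatically distinct. Deleting the images of a partition into pairs from a full list reproduces a triple of form (2). Existence of the partition is the finite check of Table~\ref{table1}.

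\paragraph{Part (ii).} You give no construction, and your candidates are doubtful; $K_4$, for one, always has three disjoint colourings from any 5-cover. Colour $a$. Choose $\vph_1(b)$ not blocked by $a$ and outside the 3-set $\MM_{db}(\MM_{ad}(\{\vph_1(a),\vph_2(a),\vph_3(a)\}))$; this is possible since two colours lie outside that set. Finish $b$ and $c$ by Hall. At $d$, Hall fails only if every $\vph_j$ is blocked from one and the same 3-set $Q$. That forces $\MM_{ad}$ and $\MM_{bd}$ to map the colour sets used at $a$ and at $b$ onto $Q$, contradicting the choice at $b$.

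The obstruction the paper uses is a vertex $u$ of degree 3 whose neighbours $x,y,z$ block, for each $j$, the same three colours $\{1,2,3\}$ in a Latin-square pattern. This leaves only $\{4,5\}$ for three colourings. Since the colours on $x,y,z$ are not under your control, the paper gives $x,y,z$ a common neighbour for every triple of perfect matchings, i.e., it uses $K_{3,120^3}$. That graph is $K_5$-minor-free but not planar, so restricting yourself to planar, let alone small, graphs discards precisely this mechanism.
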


The fact that $K_5$-minor-free graphs are $5$-degenerate easily implies that they have list-chromatic number 
at most $6$, which is  close to the optimal bound $5$. However, for the correspondence packing number this 
approach performs quite poorly.  As discussed above, $\chisc(\C) \le 4$ due to the $2$-degeneracy of cycles.
More generally, by \Cref{ext-lem} we have $\chisc(G)\leq 2d$ for every $d$-degenerate graph $G$, and this is 
sharp \cite[Prop. 24]{CCvBDK} for every $d \ge 1$, due to large complete bipartite graphs. Therefore a vanilla 
degeneracy argument can merely yield $\chisc(\K_5) \le  10$, while it is known that $\chisc(\K_5)\le 8$. This 
indicates that establishing Conjecture~\ref{conj1} is nontrivial. On the lower bound side, note that the 
second part of our Main Theorem implies that $\chisc(\K_5)\ge 6$.
\bigskip

\subsection{Proof overview}
When constructing disjoint list colorings and correspondence colorings, it is often convenient 
to proceed by induction.  Given a graph $G$, a correspondence $t$-cover $\MM$ of $G$
and disjoint partial $\MM$-colorings $\vph^0_1,\ldots,\vph^0_s$, for each vertex $v$
that is not yet colored we seek a color $\vph_j(v)$ to extend $\vph^0_j$ to $v$.
We write $L_j(v)$ for the subset of $[t]$ that is not yet forbidden from use as 
$\vph_j(v)$, due to the colors already used on $N(v)$ and the matchings $\MM$.
The following lemma, from~\cite{CCvBDK}, is generally useful for handling vertices of low degree.
Its proof is instructive, so we include it.

\begin{lemA}[\cite{CCvBDK}]
	\label[lem]{ext-lem}
	Let $G$ be a graph with a correspondence $t$-cover $\MM$ and fix $v\in V(G)$.
	If $d(v)\le t/2$, then any disjoint $\MM$-colorings $\vph_1',\ldots,\vph_s'$
	of $G-v$ extend to disjoint $\MM$-colorings $\vph_1,\ldots,\vph_s$ of $G$.
\end{lemA}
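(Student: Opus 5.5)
We extend the colorings to $v$ greedily, one coloring at a time, in the order $\vph_1,\ldots,\vph_s$. Here $s\le t$ necessarily, since $s$ pairwise disjoint colorings need $s$ distinct colors at every vertex. The only constraints on the new colors $\vph_j(v)$ are of two kinds. First, $\vph_j(v)$ must not be matched by $\MM$ to $\vph_j'(w)$ for any neighbor $w$ of $v$; this is the edge constraint for the $j$-th coloring. Second, $\vph_j(v)$ must differ from $\vph_i(v)$ for every $i<j$ already chosen; this is the disjointness constraint. Since the restrictions to $G-v$ are already disjoint and proper, nothing else needs checking.

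\emph{Counting forbidden colors.} For each $j$, each neighbor $w$ forbids at most one color of $v$, namely the partner of $\vph_j'(w)$ in the matching on $vw$. So the set $F_j$ of colors forbidden for $\vph_j(v)$ by edges has $|F_j|\le d(v)\le t/2$, and $L_j(v)=[t]\setminus F_j$ has at least $t/2$ colors. The naive greedy bound would require $|L_j(v)|>j-1$ for all $j\le s$. That fails when $s$ is close to $t$, because up to $s-1$ colors may already be used at $v$. So plain greedy is not enough, and the plan is instead to treat the choice at $v$ as a system of distinct representatives.

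\emph{Hall's theorem.} Form the bipartite graph $H$ with the colorings $\{1,\ldots,s\}$ on one side and the colors $\{1_v,\ldots,t_v\}$ on the other, joining $j$ to $c$ whenever $c\in L_j(v)$. Disjoint extensions are exactly the matchings of $H$ that saturate $\{1,\ldots,s\}$. Each left vertex has degree at least $t-d(v)\ge t/2$.

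The key observation is that each color $c$ is forbidden for at most $d(v)$ indices $j$. For each neighbor $w$, the colors $\vph_1'(w),\ldots,\vph_s'(w)$ are pairwise distinct, and the matching on $vw$ is injective. Hence the partner of $c$ in this matching equals $\vph_j'(w)$ for at most one $j$. So each right vertex has degree at least $s-d(v)\ge s-t/2$ in $H$.

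To verify Hall's condition, let $S\subseteq[s]$ be nonempty.
\begin{itemize}
\item If $|S|\le t/2$, then $|N_H(S)|\ge t/2\ge |S|$, because any single member of $S$ already has $t/2$ neighbors.
\item If $|S|>t/2$, then every color $c$ has at most $d(v)\le t/2<|S|$ non-neighbors in $[s]$. So $c$ has a neighbor in $S$, giving $N_H(S)=\{1_v,\ldots,t_v\}$ and $|N_H(S)|=t\ge s\ge |S|$.
\end{itemize}
By Hall's theorem $H$ has a matching saturating $[s]$, and setting $\vph_j(v)$ to be the color matched to $j$ completes the extension.

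The main obstacle is realizing that a purely greedy argument breaks down for large $s$, and supplying the double-counting fact that each color is blocked for at most $d(v)$ of the colorings. That fact uses the disjointness of the given colorings on the neighbors. Once it is in hand, the two cases of Hall's condition split cleanly at the threshold $t/2$, which is exactly why the hypothesis is $d(v)\le t/2$.
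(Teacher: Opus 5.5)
Your proof is correct and is essentially the paper's argument: the same auxiliary bipartite graph between colorings and colors, the same degree bounds $t-d(v)$ and $s-d(v)$, and the same split of Hall's condition at $|S|=t/2$. You also justify the color-degree bound $s-d(v)$ via the disjointness of the $\vph_j'$ at each neighbor, and you state $s\le t$ explicitly; the paper uses both facts without comment.
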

\begin{proof}
	We build an auxiliary bipartite graph $\B$ with $1,\ldots,t$ as the vertices of the first
	part, and $\vph'_1,\ldots,\vph'_s$ as the vertices of the second part, and $i\vph'_j\in E(\B)$
	if $i\in L_j(v)$.  To extend the $\MM$-colorings $\vph_j'$, we seek a matching $M_{\B}$
	saturating the second part.  For this we use Hall's Theorem.

	Note that $d_{\B}(i)\ge s-d(v)$ and $d_{\B}(\vph'_j)\ge t-d(v)$, for all $i\in[t]$
	and $j\in[s]$.  Consider $S\subseteq\{\vph_1',\ldots,\vph_s'\}$.  Clearly 
	$|N_{\B}(S)|\ge t-d(v)\ge t/2$.  So if $|S|\le t/2$, then $|N_{\B}(S)|\ge |S|$
	as desired.  Otherwise, $|S|>t/2\ge d(v)$. So $|S|+d_{\B}(i) \ge |S|+s-d(v) > s$ because
	$|S|> d(v)$.  So by Pigeonhole $i\in N_{\B}(S)$ for all $i\in [t]$.  That is, $|N_{\B}(S)| = 
	t \ge |S|$, as desired.  Thus, by Hall's Theorem we have the desired matching $M_{\B}$ in $\B$.
\end{proof}

By definition, each matching in $\MM$ need not be perfect.  But to prove the results in this paper, it
suffices to consider the case that all matchings are perfect; if not, then we can grow them greedily. 
(In what follows, we implicitly assume that all matchings in $\MM$ are perfect.)

An \emph{$r$-sum} of graphs $G_1$ and $G_2$ is formed from the disjoint union $G_1+G_2$ by specifying 
an $r$-clique $x_1,\ldots,x_r$ in $G_1$ and an $r$-clique $y_1,\ldots,y_r$ in $G_2$, identifying $x_i$ with 
$y_i$ for each $i\in[r]$, and possibly deleting some edges of the resulting $r$-clique.  The \emph{Wagner 
graph} $M_8$ (also called the \emph{M\"{o}bius 8-ladder}) is formed from the $8$-cycle $z_1\cdots z_8$ by 
adding the 4 chords $z_iz_{i+4}$ for each $i\in [4]$.
Wagner~\cite{wagner} characterized the class of maximal $K_5$-minor-free graphs as follows.

\begin{thmA}[\cite{wagner}]
	\label{wagner-thm}
	If a graph $G$ is maximal $K_5$-minor-free, then $G$ can be built by a sequence of $2$-sums and 
	$3$-sums, without deleting edges, starting from maximal planar graphs and copies of $M_8$.  
\end{thmA}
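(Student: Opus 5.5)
The statement is Wagner's classical theorem, and I would prove it by induction on $|V(G)|$. The split is according to whether $G$ has a small separating set. Small graphs (at most $4$ vertices) are complete and hence maximal planar, so they serve as the base case.

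\textbf{Step 1: connectivity and clique separators.} First, a maximal $K_5$-minor-free graph $G$ on at least $3$ vertices is $2$-connected. If $G-x$ were disconnected for some $x$ (or $G$ itself disconnected), adding an edge between two components of $G-x$ creates no new $4$-connected block, so no $K_5$ minor appears. This contradicts maximality.

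Next, suppose $S$ is a minimal separating set with $|S|\in\{2,3\}$, and let $C_1,C_2$ be components of $G-S$. By minimality every vertex of $S$ has a neighbour in each $C_i$.

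\emph{Claim: $S$ is a clique.} If $s,s'\in S$ were nonadjacent, then $G+ss'$ has a $K_5$ minor by maximality. Since $K_5$ is $4$-connected and $|S|\le 3$, all branch sets of the model except possibly those meeting $S$ lie on one side, say $C_1\cup S$. Contracting $C_2$ onto paths between the vertices of $S$ supplies the edge $ss'$ as a minor of $G$, giving a $K_5$ minor in $G$, a contradiction.

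Set $G_i=G[V(C_i)\cup S]$ together with the whole side of any other components, so that $G=G_1\oplus_S G_2$ is an $|S|$-sum without edge deletion. Each $G_i$ is a minor of $G$ (contract the other side), hence $K_5$-minor-free. Each $G_i$ is also maximal: if $G_i+e$ had a $K_5$ minor, the same $4$-connectivity argument would turn it into a $K_5$ minor in $G+e$ lying essentially in $G_i+e$. Since $G+e$ is also $K_5$-minor-free for $e$ inside $G_i$, this is a contradiction. By induction each $G_i$ is a sum of maximal planar graphs and copies of $M_8$, and therefore so is $G$.

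\textbf{Step 2: the $4$-connected case.} Now assume $G$ is $4$-connected. If $G$ is planar, it is maximal planar, since adding any edge to a planar graph that keeps it planar keeps it $K_5$-minor-free. Otherwise, by Kuratowski, $G$ contains a subdivision of $K_{3,3}$, because a $K_5$-subdivision is excluded. The goal is to show $G\cong M_8$.

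Starting from a $K_{3,3}$-subdivision $H$, I would use $4$-connectivity (via Menger) to find further paths attaching to $H$. In each case either a $K_5$ minor arises, or the structure forms the Möbius ladder $M_8$ (or a subdivision of it) with no vertices or edges to spare. Then one checks that $M_8$ is itself edge-maximal: adding any chord to $M_8$ yields a $K_5$ minor. One also checks that any extra vertex or bridge attached to an $M_8$-subdivision in a $4$-connected graph yields a $K_5$ minor. Together these force $G=M_8$.

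\textbf{Main obstacle.} The main obstacle is this last step: the case analysis that a $4$-connected nonplanar $K_5$-minor-free graph is exactly $M_8$. I would organise it by choosing $H$ to be a $K_{3,3}$-subdivision maximizing the number of branch-vertex-to-branch-vertex edges, then analysing the bridges of $H$. Each bridge must attach to at least $4$ vertices of $H$ by $4$-connectivity. Bridges attaching to two paths incident with the same branch vertex, or across a pair of nonincident paths, either produce a $K_5$ minor or contradict the choice of $H$. The single surviving configuration is the one producing $M_8$.
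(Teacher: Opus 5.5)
The paper gives no proof of this statement; it quotes it from Wagner. So the only question is whether your outline works, and it has a genuine gap in Step 1.

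\textbf{Step 1 rests on a false claim.} You claim that every minimal separating set $S$ with $|S|=3$ is a clique. $M_8$ itself refutes this. With $z_1,\dots,z_8$ as in the paper, $S=N(z_1)=\{z_2,z_5,z_8\}$ is a minimal separator: the components of $M_8-S$ are $\{z_1\}$ and $\{z_3,z_4,z_6,z_7\}$, and both are attached to all of $S$. Yet $S$ is independent, although $M_8$ is edge-maximal $K_5$-minor-free.

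Your argument breaks at ``contracting $C_2$ supplies the edge $ss'$''. Once the model is pushed to the side carrying the $S$-avoiding branch sets, you may need \emph{all three} edges on $S$. The original model may have used $C_2$ to create other adjacencies between branch sets meeting $S$. A single component attached to all of $S$ need not give a $K_3$-minor rooted at $S$; if it is one vertex of degree $3$, it gives only a star. Concretely, $M_8+z_2z_5$ has the $K_5$-model $\{z_2\},\{z_5\},\{z_1,z_8\},\{z_3,z_4\},\{z_6,z_7\}$. Here $z_1$ already joins $z_8$ to both $z_2$ and $z_5$, so it cannot also supply $z_2z_5$.

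Your argument is fine for $|S|=2$. For $|S|=3$ it works whenever the rest of $G-S$ supplies a $K_3$-minor rooted at $S$. This holds, for instance, if $G-S$ has at least three components, or if $G$ is $3$-connected and $G-S$ has two components with at least two vertices each.

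Separately, your maximality argument for $G_i$ runs backwards. $G+e$ \emph{does} have a $K_5$ minor. The point is that $S$ is a clique separator of $G+e$ and the other side is $K_5$-minor-free, so that minor lives in $G_i+e$.

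\textbf{Step 2 is aimed at an impossible target.} $M_8$ is cubic, hence not $4$-connected, so no $4$-connected $G$ is isomorphic to $M_8$. In fact every $4$-connected nonplanar graph has a $K_5$ minor. Your premise that every bridge of $H$ attaches at $\ge 4$ vertices is exactly what fails for $M_8$. Every $K_{3,3}$-subdivision in $M_8$ uses all eight vertices, leaving a single edge, such as $z_1z_5$, joining two subdivision vertices.

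\textbf{The missing idea.} Split off only the $2$-separations and those $3$-separations whose separators are forced to be triangles as above. Then treat the residual case directly: $G$ is $3$-connected, and every $3$-separator is $N(w)$ for a single vertex $w$ of degree $3$, with $G-N[w]$ connected. You must show that such a $G$ is planar or isomorphic to $M_8$. That analysis has to allow degree-$3$ vertices and bridges with only two attachments. It is where $M_8$ actually arises, and it is the heart of Wagner's proof.
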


	So to prove our Main Theorem, we would like to reduce to the case of planar graphs and copies of
	$M_8$.  However, we must ensure that our disjoint $\MM$-colorings $\vph_1,\vph_2,\vph_3$
	agree on the copies of $K_2$ and $K_3$ where we perform our 2-sums and 3-sums.  This motivates the
	following slightly stronger lemmas.  We handle the copies of $M_8$ first, even for 6 disjoint
	$\MM$-colorings, since this step is easy.

\begin{lem}
	\label[lem]{M8-lem}
	Given a copy of $M_8$ and a correspondence 6-cover $\MM$ of $M_8$, there exist disjoint
	$\MM$-colorings $\vph_1,\ldots,\vph_6$.  Furthermore, if $\vph_1^0,\ldots,\vph_6^0$ are
	disjoint $\MM$-colorings of a specified $K_2$ in $M_8$, then we can choose 
	$\vph_1,\ldots,\vph_6$ to extend $\vph_1^0,\ldots,\vph_6^0$.
\end{lem}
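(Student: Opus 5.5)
The plan is to color the vertices of $M_8$ one at a time, using \Cref{ext-lem} at every step except the last. $M_8$ is 3-regular and $t=6$, so a vertex $v$ whose neighbors are all already colored has $d(v)=3\le t/2$. Hence \Cref{ext-lem}, applied to the subgraph induced on the colored vertices together with $v$, extends 6 disjoint colorings to $v$. The only problem is the final vertex of any ordering: it also has degree 3, so it is fine too. In fact \Cref{ext-lem} never needs $v$ to have few colored neighbors; it only needs $d(v)\le t/2$ in the graph where we apply it, and $d_H(v)\le 3$ for every induced subgraph $H\subseteq M_8$ containing $v$. So every step is covered.

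Concretely, for the first claim I would order $V(M_8)$ as $z_1,\dots,z_8$. Let $G_k=M_8[\{z_1,\dots,z_k\}]$. For $k=1$, six disjoint colorings of a single vertex just pick the six colors $1_{z_1},\dots,6_{z_1}$ in some order. Then, inductively, given disjoint $\MM$-colorings $\vph_1',\dots,\vph_6'$ of $G_{k-1}$, apply \Cref{ext-lem} to $G_k$ with $v=z_k$. The hypothesis $d_{G_k}(z_k)\le 3=6/2$ holds, and we obtain disjoint $\MM$-colorings of $G_k$ (with $\MM$ restricted to $G_k$). After eight steps we have six disjoint $\MM$-colorings of $M_8$.

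For the extension claim, let the specified $K_2$ be $xy$ (an edge of $M_8$), with disjoint precolorings $\vph_1^0,\dots,\vph_6^0$ of $M_8[\{x,y\}]$. I would start the induction at $G_2=M_8[\{x,y\}]$ with these colorings and add the remaining six vertices in any order, applying \Cref{ext-lem} at each step exactly as above. Since each new vertex has degree at most 3 in the current induced subgraph, the lemma applies. It extends rather than recolors, so the final colorings agree with $\vph_j^0$ on $x$ and $y$.

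I expect no real obstacle; the one point to verify is that \Cref{ext-lem} is applied to the induced subgraph $G_k$ with the restricted cover, so that "disjoint $\MM$-colorings of $G_k-z_k$" is exactly what the induction provides. This is immediate from the definitions, since a correspondence cover restricts to subgraphs. The fact that $M_8$ is 3-regular is what makes this work with $t=6$. This also explains the paper's remark that this step is easy, in contrast with the planar pieces, which are only 5-degenerate.
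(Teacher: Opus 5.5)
Your proof is correct and is essentially the paper's argument: both color the uncolored vertices one at a time and extend all six colorings at each step via \Cref{ext-lem} with $s=t=6$. The paper phrases the degree condition via 3-degeneracy of $M_8$, while you use its 3-regularity, so any vertex order works, including one that starts from the precolored edge.
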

\begin{proof}
	We use induction on the number $n'$ of uncolored vertices.  The base case, $n'=0$, is trivial.
	And the induction step holds, because $M_8$ is 3-degenerate, by Lemma~\ref{ext-lem} with $s:=6$
	and $t:=6$.
\end{proof}

Now we consider planar graphs.

\begin{lem}
	\label[lem]{planar-lem}
	Let $G$ be a planar graph with a 6-correspondence cover $\MM$.  If $C$ is a $K_3$ in $G$,
	then each choice $\vph_1^0,\vph_2^0, \vph_3^0$ of disjoint $\MM$-colorings of $C$ extends to 
	disjoint $\MM$-colorings $\vph_1,\vph_2,\vph_3$ of $G$.
\end{lem}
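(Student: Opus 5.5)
The natural plan is a Thomassen-style induction on near-triangulations. The statement of \Cref{planar-lem} is too weak to induct on directly, so we strengthen it, as in the proof of 5-choosability, to one in which the vertices on the outer face carry weaker requirements. It helps to view the problem as a single correspondence coloring problem. Each vertex $v$ must receive an \emph{ordered triple} $(\vph_1(v),\vph_2(v),\vph_3(v))$ of distinct colors from $\{1_v,\ldots,6_v\}$. Two triples on an edge $vw$ conflict if, for some $j$, the colors $\vph_j(v)$ and $\vph_j(w)$ are matched in $\MM$. Extending a partial packing to $v$ is exactly the Hall-matching problem of the proof of \Cref{ext-lem} with $s=3$ and $t=6$.

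We first record what that Hall argument gives beyond \Cref{ext-lem}. If $v$ has $k$ colored neighbors, then each $L_j(v)$ has size at least $6-k$. Each color is removed from $L_j(v)$ by at most one neighbor for each $j$, so there are at most $3k$ removals in total. A failure of Hall's condition requires either some $L_j(v)$ to be empty, or two lists to be equal singletons, or all three lists to lie in a common $2$-set. For $k\le 3$ none of these can happen, so one arrives in the same position as \Cref{ext-lem}. For $k=4$ the only obstruction is $L_1(v)=L_2(v)=L_3(v)$, a single $2$-set. This gives a precise "list size" bookkeeping: interior vertices have no restriction, outer vertices will be allowed fewer colored neighbors at the time they are colored, and the precolored edge or triangle is fixed.

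The strengthened statement we would try to prove is the following. Let $G$ be a near-triangulation with outer cycle $C$ and let $xy\in E(C)$ be precolored by a partial packing. Then the packing extends, provided that every vertex of $C-\{x,y\}$ is colored after at most $3$ of its neighbors, interior vertices after at most $4$ in a controlled way, and so on. The exact thresholds would be tuned so that the following steps go through.

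\begin{enumerate}
\item If $C$ has a chord, split along it and apply induction first to the side containing $xy$, then to the other side with the chord as the precolored edge. This is where precoloring an edge, and hence the triangle $C$ of the lemma, is essential.
\item If $C$ has no chord, let $v$ be the neighbor of $x$ on $C$ other than $y$, and delete $v$. Its interior neighbors join the new outer cycle, with their budgets reduced accordingly.
\item Apply induction to $G-v$, and finally color $v$ by the Hall argument above.
\end{enumerate}

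Finally, \Cref{planar-lem} follows by triangulating $G$, which only adds constraints, and taking the given $K_3$ as the outer face.

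The main obstacle is step~2. In Thomassen's proof one \emph{reserves} two colors at $v$ and removes them from the neighbors. Here the analogue must reserve, for each $j$, colors so that the three final lists at $v$ still admit a system of distinct representatives. Deleting one reserved color per coloring from each new outer neighbor costs that neighbor one unit of budget in each of the three colorings simultaneously.

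I would first try reserving, for each $j$, a $2$-set $R_j\subseteq L_j(v)$, with the $R_j$ chosen not all equal, so that $\{R_1,R_2,R_3\}$ has a system of distinct representatives that survives the loss of any one element per set. I would then verify that the new outer vertices still satisfy the hypothesis using the counting above. If the budgets do not close up, the fallback is to find an unavoidable set of reducible configurations instead, such as vertices of degree at most $3$, adjacent low-degree vertices, or $4$-vertices with special neighborhoods. Their reducibility would be proved via the refined Hall analysis, using the $k=4$ obstruction $L_1=L_2=L_3$ as the only case to rule out.
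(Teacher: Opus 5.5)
Your architecture (split along chords; otherwise delete the outer neighbor $v=w_n$ of $x=w_1$, reserve colors at $v$, and strip their images from $v$'s interior neighbors) is the right one. The gap is that the strengthened hypothesis is never stated, and the budget accounting you sketch does not close. If you reserve 2-sets $R_j\subseteq L_j(v)$, each interior neighbor $u$ of $v$ loses two colors from every list. Later $u$ is colored after two more outer neighbors (e.g.\ as the third vertex of a triangle in the base case). So in your Hall bookkeeping $u$ is effectively in the $k=4$ case, not $k\le 3$, and the obstruction $L_1=L_2=L_3$ must be excluded by the \emph{shape} of the lists, not their sizes.

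Your rule ``$R_j$ not all equal'' does not achieve this. Take $R_1=R_2=\{a,b\}$ and $R_3=\{c,d\}$ with $\{a,b,c,d,e,f\}=[6]$, and write primes for images under $\MM_{vu}$. Then $L_1(u)=L_2(u)=[6]\setminus\{a',b'\}$ and $L_3(u)=[6]\setminus\{c',d'\}$. Two later neighbors blocking $(c',d',a')$ and $(d',c',b')$ leave all three lists equal to $\{e',f'\}$. Also, if $R_1=\{c,d\}$ and $R_2=\{c,e\}$, then $w_{n-1}$ alone can block $d$ and $e$, leaving $c$ as the only option for both $\vph_1(v)$ and $\vph_2(v)$.

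The missing idea is the invariant of the paper's Key Lemma. Every outer vertex other than $x,y$ has three 4-lists with each color of $[6]$ in exactly two of them. Then $L_1\cap L_2\cap L_3=\emptyset$, which kills the $k=4$ obstruction and gives the base case via Hall. The reserved sets must form a partition $R_1\uplus R_2\uplus R_3=[6]$ with $R_j\subseteq L_j(v)$ and $\MM_{xv}(\vph_j(x))\notin R_j$. Disjointness gives three distinct colors at $v$ whatever $w_{n-1}$ blocks. Because $\MM_{vu}$ is a bijection, the stripped pairs at each new outer vertex $u$ again partition $[6]$, so the invariant is reproduced exactly.

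Existence of such a partition is a finite check that genuinely needs this invariant at $v$; under a pure size or budget hypothesis it can fail. For example, $L_1(v)=L_2(v)=\{1,2,3,4\}$ and $L_3(v)=\{3,4,5,6\}$ force $R_3=\{5,6\}$, which is impossible if $x$ blocks $5$ in the third coloring.

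Your final reduction also needs two repairs. First, a separating triangle $C$ cannot be made the outer face, so you must split $G$ along $C$ and treat each side separately. Second, your strengthened statement precolors only an edge, so the third vertex $z$ of $C$ must be deleted. Strip from each neighbor $u$'s $j$th list the color $\MM_{zu}(\vph_j^0(z))$ plus one more color, chosen so that the stripped pairs partition $[6]$.
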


The key to the proof of Lemma~\ref{planar-lem} is proving a still stronger statement, given below in the Key Lemma, that more easily facilitates 
proof by induction.  Our technique is known as a ``Thomassen-style'' proof, due to the strikingly short and 
elegant proof of Carsten Thomassen~\cite{Thomassen-5choosable} that all planar graphs are 5-choosable.  This 
method has since been applied many times~\cite{CCKPSZ, GZ, KT, LWZZ, thomassen-grotzsch2, WL, 
zhu-4choosable}.  For a unified study of numerous such examples, see~\cite[Chapter 11]{GCM}.

\begin{keylem}
	Let $G$ be a plane near-triangulation, with a correspondence 6-cover $\MM$.  Denote the vertices 
	on the outer face of $G$, in clockwise order, by $w_1,\ldots,w_n$.  There exist disjoint 
	$\MM$-colorings $\vph_1,\vph_2,\vph_3$ that extend $\vph_1^0,\vph_2^0,\vph_3^0$ and that have 
	$\vph_j(v)\in L_j(v)$ for all $j\in[3]$ if $\vph_1^0,\vph_2^0,\vph_3^0$ and $L_1,L_2,L_3$ 
	satisfy the following conditions
	\begin{enumerate}
		\item[(1)] $L_1(v)=L_2(v)=L_3(v)=[6]$ 
			for each vertex $v$ not on the outer face.
		\item[(2)] For each vertex $w_i$ with $i\in\{3,\ldots,n\}$ there exist distinct colors 
			$c_1,\ldots,c_6\in [6]$ with 
			$L_1(w_i)=\{c_1,c_2,c_3,c_4\}$ and
			$L_2(w_i)=\{c_1,c_2,c_5,c_6\}$ and
			$L_3(w_i)=\{c_3,c_4,c_5,c_6\}$.
		\item[(3)] 
			$\vph_1^0,\vph_2^0,\vph_3^0$ are disjoint $\MM$-colorings of $G[\{w_1,w_2\}]$.
	\end{enumerate}
\end{keylem}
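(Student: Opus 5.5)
We argue by induction on $|V(G)|$, following Thomassen's scheme. The base case $n=3$, where $G$ is a triangle $w_1w_2w_3$, needs a direct check. We must extend $\vph^0_1,\vph^0_2,\vph^0_3$ to $w_3$ with $\vph_j(w_3)\in L_j(w_3)$, with the three values distinct, and avoiding the colors matched to $\vph^0_j(w_1)$ and $\vph^0_j(w_2)$. This removes at most two colors from each $4$-set $L_j(w_3)$, so each $\vph_j$ keeps at least two candidates. By condition (2), each color of $[6]$ lies in exactly two of the sets $L_j(w_3)$. A Hall-type check on the bipartite graph between $\{\vph_1,\vph_2,\vph_3\}$ and $[6]$ then gives a system of distinct representatives. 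Any two of the $\vph_j$ together have at least three available colors, because their lists share only two colors. All three together have at least three colors, since the union of their lists is $[6]$ and at most six removals are spread over distinct pairs $(j,c)$. We will verify this last point carefully.

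For the induction step, first suppose $G$ has a chord $w_iw_k$ of the outer cycle. Split $G$ along the chord into $G_1$, which contains $w_1w_2$, and $G_2$. Apply induction to $G_1$ to color it, which colors $w_i$ and $w_k$. Then apply induction to $G_2$ with $w_iw_k$ playing the role of $w_1w_2$. The precolored pair is a disjoint coloring of an edge, and the remaining outer vertices of $G_2$ keep their lists satisfying (2). Interior vertices of $G_2$ are interior in $G$, so (1) holds. So the chord case is routine.

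Now assume there is no chord. Let $w_n$ be the outer neighbor of $w_1$ other than $w_2$, and let $u_1,\dots,u_m$ be its neighbors interior to $G$. Then $G':=G-w_n$ is a near-triangulation whose outer cycle is $w_1,w_2,\dots,w_{n-1},u_m,\dots,u_1$. Each $u_\ell$ currently has full lists $[6]$, and the plan is to assign new lists satisfying (2). In Thomassen's proof one reserves two colors for $w_n$ and deletes them from the lists of the $u_\ell$. Here we instead reserve, for each $j$, a set of two colors $R_j\subseteq L_j(w_n)$ not forbidden by $\vph^0_j(w_1)$. We choose $R_1,R_2,R_3$ so that any later choice of colors from $u_{\ell}$'s lists leaves a valid distinct triple for $w_n$. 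For each $u_\ell$, let its new list for $\vph_j$ be $[6]$ minus the two colors matched to $R_j$ along the edge $u_\ell w_n$. The removed pairs must have the intersection structure of (2): the three removed $2$-sets are pairwise disjoint and partition $[6]$. Equivalently, the three $R_j$ must be pairwise disjoint. This gives the required structure automatically, since the matching on $u_\ell w_n$ is perfect and so preserves disjointness.

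So the key requirement on $w_n$ is the following. Choose pairwise disjoint $2$-sets $R_j\subseteq L_j(w_n)$ avoiding the colors matched to $\vph^0_j(w_1)$. After $\vph_j(w_{n-1})$ forbids at most one more color, we then pick distinct representatives from $R_j$ minus that forbidden color. Each $\vph_j$ still has at least one candidate, but distinctness needs care, and it may require using a slightly larger set for $w_n$ rather than only $R_j$. This step is the main obstacle. With $L_j(w_n)$ of size $4$ and one color removed by $w_1$, three colors remain for each $j$. We need disjoint $2$-subsets that cover $[6]$, with the final SDR guaranteed after the $w_{n-1}$ removals. The intended resolution is the partition structure of (2). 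Write $L_1=\{c_1,c_2,c_3,c_4\}$, $L_2=\{c_1,c_2,c_5,c_6\}$, $L_3=\{c_3,c_4,c_5,c_6\}$. Assign the pair $\{c_1,c_2\}$ to $\vph_1$ or $\vph_2$, and similarly for the other pairs, arranged cyclically, adjusting the choice according to which colors $w_1$ forbids. Then apply Hall's theorem at the end as in Lemma~\ref{ext-lem}. If the reservation cannot be made fully disjoint, fall back on the final $w_n$ extension, where each $\vph_j$ has at least $4-2=2$ options, and run the same counting argument as in the base case.
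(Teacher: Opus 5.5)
\textbf{Verdict: the proposal has a genuine gap in the chordless step.}

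\textbf{What matches the paper.} Your base case, your chord case, and your set-up of the chordless case are the paper's. You delete $w_n$, reserve $R_j\subseteq L_j(w_n)$ avoiding $f_j:=\MM_{w_1w_n}(\vph_j(w_1))$, and remove $\MM_{w_nu}(R_j)$ from the $j$-th list of each interior neighbor $u$. You also correctly see that (2) then holds at each such $u$ exactly when $R_1\uplus R_2\uplus R_3=[6]$. There is a small slip in the base case. Two of the $\vph_j$ need not have three available colors between them, since both can be left with exactly $\{c_1,c_2\}$. But Hall needs only two there, and your count for all three (at least six surviving incidences, each color in at most two lists) is right.

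\textbf{Where it breaks.} You never prove that such a partition exists, and that is the whole content of the chordless step. You also misplace the difficulty. Once the $R_j$ are pairwise disjoint, distinctness at $w_n$ is automatic: any element of $R_j\setminus\{\MM_{w_{n-1}w_n}(\vph'_j(w_{n-1}))\}$ can serve as $\vph_j(w_n)$. No Hall argument is needed. You must not go beyond $R_j$ either, because colors outside $R_j$ are not protected from the interior neighbors.

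Your suggested construction gives each whole pair $L_1\cap L_2=\{c_1,c_2\}$, $L_1\cap L_3=\{c_3,c_4\}$, $L_2\cap L_3=\{c_5,c_6\}$ to one of its two owners, cyclically. This fails. If $f_1=c_1$ and $f_2=c_2$, then $\{c_1,c_2\}$ must lie in $R_1\cup R_2$, yet it can go wholly to neither.

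Your fallback does not work either. If the $R_j$ overlap, (2) fails at the $u_\ell$, as you noted yourself, so induction cannot be applied to $G-w_n$. Without the reservation, $w_n$ can have arbitrarily many interior neighbors blocking colors, so ``at least $4-2=2$ options'' is false.

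\textbf{The missing idea.} Split every pair. Put one color of $L_1\cap L_2$ into $R_1$ and the other into $R_2$, sending $f_1$ to $R_2$ and $f_2$ to $R_1$ whenever they lie in this pair. This is consistent because $f_1\ne f_2$: the $\vph_j(w_1)$ are distinct and $\MM_{w_1w_n}$ is a matching. Do the same for $L_1\cap L_3$ and $L_2\cap L_3$. This always gives the required partition. The paper obtains it by a symmetry reduction to the eight cases of Table~\ref{table1}, and all of its partitions are of this split form.
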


\section{Proofs}

In this section we present proofs of our results.  We first prove part (i) of our Main Theorem, by using 
\Cref{M8-lem}, \Cref{planar-lem}, and Theorem~\ref{wagner-thm} (Wagner's characterization of 
$K_5$-minor-free graphs).  That is, given a $K_5$-minor-free graph 
$G$ and a correspondence 6-cover $\MM$, we show how to construct disjoint $\MM$-colorings 
$\vph_1,\vph_2,\vph_3$.  Next, we prove \Cref{planar-lem} using our Key Lemma; and after this we prove our 
Key Lemma via a ``Thomassen-style'' induction proof.  Finally, we conclude this section by proving part (ii) 
of our Main Theorem.  That is, we construct a $K_5$-minor-free graph $G$ and a correspondence 6-cover 
$\MM$ of $G$ that do not admit 3 disjoint $\MM$-colorings.

\begin{proof}[Proof of the Main Theorem.]
	Fix a $K_5$-minor-free graph $G$ and a correspondence 6-cover $\MM$.  It suffices to consider
	the case that $G$ is a maximal $K_5$-minor-free graph; if not, then we add edges to make it so.
	(The trivial case that $|V(G)|\le 2$ is handled easily by \Cref{ext-lem}.)

	By \Cref{wagner-thm}, $G$ can be constructed by a sequence of 2-sums and 3-sums starting from
	copies of $M_8$ and planar graphs.  Furthermore, by possibly reordering the sequence, we can assume
	that for each 2-sum or 3-sum one of the graphs being summed is itself either a copy of $M_8$ or a 
	planar graph.  Fix such a construction sequence, and let $q$ be the sum of its
	numbers of 2-sums and 3-sums.  We proceed by induction on $q$.  If $q=0$, then $G$ is planar or a 
	copy of $M_8$, so we are done by \Cref{M8-lem} or by \Cref{planar-lem}.  To be precise, in the latter
	case, we should first fix a copy $C$ of $K_3$ and disjoint $\MM$-colorings of $C$.  But this
	is easy by \Cref{ext-lem}.  This completes the base case, $q=0$.

	Now assume that $q\ge 1$.  So $G$ is formed from a 2-sum or 3-sum of graphs $G_1$ and $G_2$,
	where $G_2$ is planar or a copy of $M_8$ and $G_1$ is formed from a sequence of 2-sums and
	3-sums of length $q-1$.  By the induction hypothesis, $G_1$ has
	disjoint $\MM$-colorings $\vph_1',\vph_2',\vph_3'$.  Let the copy of $K_2$ or $K_3$ in $G_2$, 
	that is involved in the sum to form $G$, inherit from the corresponding vertices in $G_1$ three
	disjoint $\MM$-colorings $\vph_1^0,\vph_2^0,\vph_3^0$.  By \Cref{M8-lem} or \Cref{planar-lem}, 
	we can extend $\vph_1^0,\vph_2^0,\vph_3^0$ to disjoint $\MM$-colorings $\vph_1'',\vph_2'', 
	\vph_3''$ of $G_2$.  (If $G_2$ is planar and only a $K_2$ is precolored, then we first extend 
	the precoloring to a $K_3$, via \Cref{ext-lem}, before invoking \Cref{planar-lem}; this is possible 
	because $G_2$ is \emph{maximal planar}, so every $K_2$ is contained in a $K_3$.) Since 
	$\vph_j'$ and $\vph_j''$ agree on the vertices of $V(G_1)\cap V(G_2)$, for all $j\in[3]$, 
	together they give our disjoint $\MM$-colorings $\vph_1,\vph_2,\vph_3$ of $G$.
\end{proof}

Note that if we could prove a stronger version of \Cref{planar-lem} with 
4 disjoint $\MM$-colorings (or~5~or~6), then the above
proof  would give an an analogously stronger version of the Main Theorem.

\begin{proof}[Proof of \Cref{planar-lem}.]
	We now use the Key Lemma to prove \Cref{planar-lem}.  Fix a planar graph $G$ with a correspondence
	6-cover $\MM$, a copy $C$ of $K_3$ in $G$, and disjoint $\MM$-colorings $\vph_1^0,\vph_2^0,\vph_3^0$
	of $C$.  We use induction on $|V(G)|$; the base case $|V(G)|=3$ holds trivially.

	First suppose that $C$ is a separating cycle in $G$.  For each component $H_i$ of $G-V(C)$, let
	$G_i:=G[V(H_i)\cup V(C)]$.  By the induction hypothesis, the lemma holds for each $G_i$; that is,
	each $G_i$ has disjoint $\MM$-colorings $\vph_1^i,\vph_2^i,\vph_3^i$ that agree with 
	$\vph_1^0,\vph_2^0,\vph_3^0$ on $C$.  Since all these disjoint $\MM$-colorings agree on $C$,
	their union gives the desired disjoint $\MM$-colorings of $G$.

	So assume instead that $C$ is not a separating cycle in $G$; that is, $C$ is the boundary of a face.
	By redrawing if needed, we assume that $C$ is the boundary of the outer face of $G$; we denote its
	vertices by $w_1,w_2,w_3$.  Let $G':=G-w_3$.  For each $x\in N_G(w_3)$ and each $i\in [3]$, we delete 
	from $L_i(x)$ the color $\vph_i^0(w_3)$ as well as another arbitrary color so that $x$ will satisfy
	condition (2) of the Key Lemma.  
	Now conditions (1), (2), and (3) of the Key Lemma hold.  So we can
	extend these disjoint partial $\MM$-colorings to disjoint $\MM$-colorings 
	$\vph_1,\vph_2,\vph_3$ of $G$ that agree on $C$ with $\vph_1^0,\vph_2^0,\vph_3^0$.
\end{proof}

Now we prove our Key Lemma.
To denote the matching of $\MM$ for an edge $yz\in E(G)$, we write $\MM_{yz}$.  Furthermore, given a partial
$\MM$-coloring $\vph_i$ that colors a vertex $y$, we denote by $\MM_{yz}(\vph_i(y))$ the color for $z$
that is matched by $\MM_{yz}$ to the color $\vph_i(y)$ for $y$.  (And for an arbitrary subset $S$ of $[6]$
we let $\MM_{yz}(S):=\bigcup_{i\in S}\MM_{yz}(i)$.)

\begin{proof}[Proof of the Key Lemma.]
	Our proof is by induction on $|V(G)|$.  The base case, $|V(G)|=3$, is handled as follows.  We 
	construct a bipartite graph $\B$ with the vertices of one part being $\vph_1,\vph_2,\vph_3$,
	the vertices of the other part 
	being the colors $1,\ldots,6$ and $\vph_j$ being adjacent to a color $h$ if $h\in 
	L_j(w_3)\setminus\{\MM_{w_1w_3}(\vph^0_j(w_1)), \MM_{w_2w_3}(\vph_j^0(w_2))\}$.  
	That is, $\vph_j$ is \emph{not} adjacent to a color $h$ precisely when $h$ is either
	absent from $L_j$ or when the color is ``blocked'' from being used on $w_3$ by $\vph_j$, 
	due to the color used on $w_1$ or on $w_2$.  It suffices to find a matching in $\B$ that 
	saturates $\{\vph_1,\vph_2, \vph_3\}$; this will tell us how to extend the disjoint 
	$\MM$-colorings to $w_3$.  We will do this by Hall's Theorem.
	Note that $d_\B(\vph_j)\ge |L_j(v_3)|-2=2$ for all $j\in[3]$.  Furthermore, 
	$L_1(w_3)\cap L_2(w_3)\cap L_3(w_3)=\emptyset$.  Hence, 
	$|N_\B(\vph_1)\cup N_\B(\vph_2)\cup N_\B(\vph_3)|\ge 3$.  Thus, $\B$ satisfies the hypothesis of 
	Hall's Theorem, and we get the desired matching in $\B$ saturating $\{\vph_1,\vph_2,\vph_3\}$.

	Now we consider the induction step.  First suppose that $G$ contains a chord $w_iw_\ell$ of the cycle
	bounding the outer face.  By symmetry, we assume that $1\notin\{i,\ell\}$ (otherwise
	we relabel vertices in counterclockwise order, maintaining the fact 
	that $|L_j(w_h)|=1$ for each $h\in [2]$ and each $j\in[3]$).

	Let $C_1$ be the cycle induced by $w_1\cdots w_iw_\ell\cdots w_n$, and 
	$C_2$ be the cycle induced by $w_iw_{i+1}\cdots w_\ell$.  For each $h\in[2]$, let $G_h$ be the 
	subgraph of $G$ induced by the vertices on or inside of $C_h$.  By induction, we have disjoint
	$\MM$-colorings $\vph_1^1,\vph_2^1,\vph_3^2$ for $G_1$ that extend $\vph_1^0,\vph_2^0,\vph_3^0$.

	Now in $G_2$, we inherit from $G_1$ disjoint $\MM$-colorings of $G[\{w_i,w_{\ell}\}]$;
	denote them by $\vph_1^{00},\vph_2^{00},\vph_3^{00}$.
	Again, by induction, $G_2$ has disjoint $\MM$-colorings $\vph_1^2,\vph_2^2,\vph_3^2$, that
	extend $\vph_1^{00},\vph_2^{00},\vph_3^{00}$.
	Since $\vph_j^1$ and $\vph_j^2$ agree on $w_i,w_\ell$, for each $j\in[3]$, their unions give the 
	desired disjoint $\MM$-colorings $\vph_j$ for $G$.

	Now we assume instead that the boundary cycle $C$ of the outer face has no chord.
	Our plan is to delete $w_n$ and remove certain colors from the lists of neighbors of $w_n$, other
	than $w_1$ and $w_{n-1}$.  We get the desired disjoint $\MM$-colorings for this smaller graph by 
	induction, and afterward show that we can extend these colorings to the desired colorings for $G$.  
	Now we give more details.
	
	We will partition $[6]$ as $R_1\uplus R_2\uplus R_3$ such that, 
	for all $j\in [3]$, we have 
	\begin{align}
		\mbox{$|R_j|=2$ and $R_j\subseteq L_j(w_n)$ and $\MM_{w_1w_n}(\vph_j(w_1))\notin R_j$.}
		\label{R-condition}
	\end{align}
	(Here $R$ stands for ``reserved'' since these are colors in $L_j(w_n)$ that we do not allow to be 
	blocked by any neighbor of $w_n$, except for possibly $w_{n-1}$.)
	We first assume that we can find such a partition of $[6]$ satisfying \eqref{R-condition}, and 
	use this partition to construct the disjoint $\MM$-colorings.  Near the end of the proof, 
	we show how to find such a partition satisfying \eqref{R-condition}.
        Also, see \Cref{example1}.

	Let $G':=G-w_n$.  Let $L'_j(v):=L_j(v)\setminus \MM_{w_nv}(R_j)$ for each $j\in [3]$ and 
	each $v\in N_G(w_n) \setminus\{w_1,w_{n-1}\}$.  For all other $v\in V(G')$, let $L'_j(v):=L_j(v)$ 
	for all $j\in[3]$.  Now we must find disjoint $\MM$-colorings $\vph_1',\vph_2',\vph_3'$ for $G'$ 
	such that $\vph'_j(v)\in L'_j(v)$
	for all $v\in V(G')$ and all $j\in[3]$.  This essentially holds by the induction hypothesis, but
	we remark on a few details.  Because $G$ has no chords, each $v\in V(G')\setminus\{w_1,w_2\}$
	has $|L'_j(v)|\ge 4$ for all $j\in[3]$ and $\cup_{j=1}^3L'_j(v)=[6]$.  

	Let $\vph'_1,\vph'_2,\vph'_3$ denote the colorings of $G'$ guaranteed by the induction hypothesis.
	To extend $\vph'_j$ to $w_n$, we simply pick $\vph_j(w_n)$ from $R_j\setminus 
	\{\MM_{w_{n-1}w_n}(\vph_j'(w_{n-1}))\}$.
	This cannot create a conflict with any neighbor of $w_n$, since $\MM_{w_1w_n}(\vph_j'(w_1))\notin R_j$
	and by construction $\MM_{w_nx}(R_j)\cap L_j'(x) = \emptyset$ for each $x\in N_G(w_n)
	\setminus\{w_1,w_{n-1}\}$.

	\begin{center}
	\begin{tabular}{ccc|ccc}
		$\MM_{w_1w_n}(\vph_1(w_1))$ & $\MM_{w_1w_n}(\vph_2(w_1))$ & $\MM_{w_1w_n}(\vph_3(w_1))$ 
		& $R_1$ & $R_2$ & $R_3$\\
		\hline 
		\makecell{1\\1} & \makecell{2\\2} & \makecell{3\\5} & 23 & 15 & 46\\
		\hline
		\makecell{1\\1} & \makecell{5\\5} & \makecell{3\\6} & 23 & 16 & 45\\
		\hline
		\makecell{3\\3} & \makecell{1\\1} & \makecell{4\\5} & 14 & 25 & 36\\
		\hline
		\makecell{3\\3} & \makecell{5\\5} & \makecell{4\\6} & 14 & 26 & 35\\
		\hline
	\end{tabular}
		\captionsetup{width=.775\textwidth}
		\captionof{table}{Left: The 8 possibilities for 
		$(\MM_{w_1w_n}(\vph_1(w_1)),\MM_{w_1w_n}(\vph_2(w_1)),\MM_{w_1w_n}(\vph_3(w_1)))$,
		up to permuting colors. Right: Our 4 partitions of $[6]$ as $R_1\uplus R_2\uplus R_3$.%
		\label{table1}}
	\end{center}

	Finally, we construct our partition of $[6]$ as
	$R_1\uplus R_2\uplus R_3$.  By Condition (2) of the hypothesis, and by possibly 
	renaming colors, we assume that $L_1(w_n)=\{1,2,3,4\}$, $L_2(w_n)=\{1,2,5,6\}$, and 
	$L_3(w_n)=\{3,4,5,6\}$.  We assume that $\MM_{w_1w_n}(\vph_j(w_1))\in L_j(w_n)$ for 
	each $j\in [3]$; doing so makes our task no easier.  
	We can swap the names of colors 1 and 2, and 3 and 4, and 5 and 6.
	So, by symmetry, we need only consider the 8 possibilities for 
	$(\MM_{w_1w_n}(\vph_1(w_1)),\MM_{w_1w_n}(\vph_2(w_1)),\MM_{w_1w_n}(\vph_3(w_1)))$ 
	shown in Table~\ref{table1}.  In each case we use the
	corresponding partition shown on the right.  It is straightforward to check that 
	this partition has the desired properties.  This finishes the proof.
\end{proof}

We used symmetry twice to simplify the proof of the Key Lemma.  This makes the proof more compact, but
harder to implement.  For concreteness, we work through the example
in Figure~\ref{example1-fig}.  

\newlength{\outerradius}
\setlength{\outerradius}{3.4pt}

\newcommand{\partshadedcircle}[4]{ 
  \begin{scope}[shift=#3, rotate=#4]
    \fill[gray!25!blue!65] (0,0) -- (90-#1:\outerradius)
      arc (90-#1:90-#2:\outerradius) -- (0,0);
  \draw[thin] circle (\outerradius+.25pt);
  \end{scope}
}

\newcommand\mvertA{\partshadedcircle{0}{120}}
\newcommand\mvertB{\partshadedcircle{120}{240}}
\newcommand\mvertC{\partshadedcircle{240}{360}}
\newcommand\mvertD{\partshadedcircle{0}{240}}
\newcommand\mvertE{\partshadedcircle{-120}{120}}
\newcommand\mvertF{\partshadedcircle{120}{360}}
\newcommand\mvertG{\partshadedcircle{0}{360}}
\newcommand\mvertH{\partshadedcircle{0}{0}}

\begin{figure}[!b]
	\begin{tikzpicture}[semithick, scale=.85] 
		\tikzstyle{lStyle}=[shape = circle, minimum size = .85*8pt, inner sep = 1.0pt, outer sep = 1pt, draw, fill=white]
	\tikzset{every node/.style=lStyle}
	\def\dist{.35cm}
	\def\myscale{2}

	\newcommand\mynode[4]{  
		\begin{scope}[xshift=\myscale*#1 cm, yshift=\myscale*#2 cm, rotate=#3]
		  \draw[fill=white] (-2.5*\dist,0) ++ (-.7*\dist,-.6*\dist) --++ (6.4*\dist,-.3*\dist) --++ 
			(0,1.8*\dist) --++ (-6.4*\dist,-.3*\dist) -- cycle;	
		  \draw (-2.5*\dist,0) node (v1#4) {} ++ (\dist,0) node (v2#4) {} ++ (\dist,0) node (v3#4) {}
		  	  ++ (\dist,0) node (v4#4) {} ++ (\dist,0) node (v5#4) {} ++ (\dist,0) node (v6#4) {};
		\end{scope}
	}
	
		\draw[gray, line width=.1in] (3.80,-4.6) -- (5.00,-5.05); 
		\draw[gray, line width=.1in] (1.225,-2.5) -- (2.1,-3.375); 
		\draw[gray, line width=.107in] (0.59,0.6) -- (0.59,-0.6); 
		\draw[gray, line width=.1in] (1.225,2.5) -- (2.1,3.375); 
		\draw[gray, line width=.1in] (3.80,4.6) -- (5.00,5.05); 

        \draw (7.45,-5.05) node[draw=none] {\footnotesize {$w_1$}};
        \draw (7.65,4.95) node[draw=none] {\footnotesize {$w_{n-1}$}};
        \draw (9.05,-1.35) node[draw=none] {\footnotesize {$w_{n}$}};
        
		\mynode{4.5}{0}{-90}{a}
		\mynode{4.5}{0}{-90}{a}
		\mynode{4.5}{0}{-90}{a}
		\mvertD{(v1a)}{0}
		\mvertE{(v2a)}{0}
		\mvertF{(v3a)}{0}
		\mvertD{(v4a)}{0}
		\mvertF{(v5a)}{0}
		\mvertE{(v6a)}{0}

		\mynode{3}{-2.5}{0}{b}
		\mvertA{(v1b)}{0}
		\mvertH{(v2b)}{0}
		\mvertB{(v3b)}{0}
		\mvertH{(v4b)}{0}
		\mvertC{(v5b)}{0}
		\mvertH{(v6b)}{0}

		\mynode{1.5}{-2}{-35}{c}
		\mvertG{(v1c)}{0}
		\mvertG{(v2c)}{0}
		\mvertG{(v3c)}{0}
		\mvertG{(v4c)}{0}
		\mvertG{(v5c)}{0}
		\mvertG{(v6c)}{0}

		\mynode{.485}{-.8}{-69}{d}
		\mvertG{(v1d)}{0}
		\mvertG{(v2d)}{0}
		\mvertG{(v3d)}{0}
		\mvertG{(v4d)}{0}
		\mvertG{(v5d)}{0}
		\mvertG{(v6d)}{0}

		\mynode{.485}{.8}{-111}{e}
		\mvertG{(v1e)}{0}
		\mvertG{(v2e)}{0}
		\mvertG{(v3e)}{0}
		\mvertG{(v4e)}{0}
		\mvertG{(v5e)}{0}
		\mvertG{(v6e)}{0}
		
		\mynode{1.5}{2}{35-180}{f}
		\mvertG{(v1f)}{0}
		\mvertG{(v2f)}{0}
		\mvertG{(v3f)}{0}
		\mvertG{(v4f)}{0}
		\mvertG{(v5f)}{0}
		\mvertG{(v6f)}{0}

		\mynode{3}{2.5}{180}{g}
		\mvertE{(v1g)}{0}
		\mvertF{(v2g)}{0}
		\mvertE{(v3g)}{0}
		\mvertD{(v4g)}{0}
		\mvertD{(v5g)}{0}
		\mvertF{(v6g)}{0};

		\draw [thin, color=gray]
		(v1a) -- (v3b) 
		(v4a) -- (v1b)
		(v6a) -- (v5b)

		(v1a) -- (v3c) 
		(v2a) -- (v5c)
		(v3a) -- (v2c)
		(v4a) -- (v6c)
		(v5a) -- (v1c)
		(v6a) -- (v4c)

		(v1a) -- (v3d) 
		(v2a) -- (v2d)
		(v3a) -- (v6d)
		(v4a) -- (v5d)
		(v5a) -- (v4d)
		(v6a) -- (v1d)

		(v1a) -- (v1e) 
		(v2a) -- (v5e)
		(v3a) -- (v2e)
		(v4a) -- (v3e)
		(v5a) -- (v6e)
		(v6a) -- (v4e)

		(v1a) -- (v4f) 
		(v2a) -- (v3f)
		(v3a) -- (v1f)
		(v4a) -- (v6f)
		(v5a) -- (v2f)
		(v6a) -- (v5f)

		(v1a) -- (v5g) 
		(v2a) -- (v6g)
		(v3a) -- (v3g)
		(v4a) -- (v4g)
		(v5a) -- (v1g)
		(v6a) -- (v2g);

		\begin{scope}[xshift=4.5in]

		\draw[gray, line width=.1in] (3.80,-4.6) -- (5.00,-5.05); 
		\draw[gray, line width=.1in] (1.225,-2.5) -- (2.1,-3.375); 
		\draw[gray, line width=.107in] (0.59,0.6) -- (0.59,-0.6); 
		\draw[gray, line width=.1in] (1.225,2.5) -- (2.1,3.375); 
		\draw[gray, line width=.1in] (3.80,4.6) -- (5.00,5.05); 

        \draw (7.45,-5.05) node[draw=none] {\footnotesize {$w_1$}};
        \draw (7.65,4.95) node[draw=none] {\footnotesize {$w_{n-1}$}};

		\mynode{3}{-2.5}{0}{b}
		\mvertA{(v1b)}{0}
		\mvertH{(v2b)}{0}
		\mvertB{(v3b)}{0}
		\mvertH{(v4b)}{0}
		\mvertC{(v5b)}{0}
		\mvertH{(v6b)}{0}

		\mynode{1.5}{-2}{-35}{c}
		\mvertE{(v1c)}{0}
		\mvertD{(v2c)}{0}
		\mvertF{(v3c)}{0}
		\mvertF{(v4c)}{0}
		\mvertD{(v5c)}{0}
		\mvertE{(v6c)}{0}

		\mynode{.485}{-.8}{-69}{d}
		\mvertF{(v1d)}{0}
		\mvertD{(v2d)}{0}
		\mvertF{(v3d)}{0}
		\mvertE{(v4d)}{0}
		\mvertE{(v5d)}{0}
		\mvertD{(v6d)}{0}

		\mynode{.485}{.8}{-111}{e}
		\mvertF{(v1e)}{0}
		\mvertD{(v2e)}{0}
		\mvertE{(v3e)}{0}
		\mvertF{(v4e)}{0}
		\mvertD{(v5e)}{0}
		\mvertE{(v6e)}{0}
		
		\mynode{1.5}{2}{35-180}{f}
		\mvertD{(v1f)}{0}
		\mvertE{(v2f)}{0}
		\mvertD{(v3f)}{0}
		\mvertF{(v4f)}{0}
		\mvertF{(v5f)}{0}
		\mvertE{(v6f)}{0}

		\mynode{3}{2.5}{180}{g}
		\mvertE{(v1g)}{0}
		\mvertF{(v2g)}{0}
		\mvertE{(v3g)}{0}
		\mvertD{(v4g)}{0}
		\mvertD{(v5g)}{0}
		\mvertF{(v6g)}{0}

		\end{scope}
	
\end{tikzpicture}
\caption{Left: The portion of $\MM$ induced by $N[w_n]$. Right: The portion induced by $N[w_n]-\{w_n\}$. (Each thick gray line, say between vertices $x$ and $y$, denotes a perfect matching between $[6]_x$ and 
$[6]_y$. All other aspects of the figure are explained in \Cref{example1}.)%
\label{example1-fig}}
\end{figure}

\begin{example}
\label{example1}
Figure~\ref{example1-fig} only shows  $N[w_n]$.  The rightmost 3 vertices on the left side are 
$w_{n-1}$, $w_n$, and $w_1$.  Each trapezoid encloses the numbers $1,\ldots,6$ for some vertex (with
1 at the narrow end and 6 at the wide end).  The shading of a vertex shows the $\MM$-colorings $\vph_j$
for which the corresponding color is available.  The first third of the circle (12:00-4:00) represents
$\vph_1$, and the second and final thirds represent, respectively, $\vph_2$ and $\vph_3$.  
So the figure shows that $\vph_1(w_1)=1$, $\vph_2(w_1)=3$, $\vph_3(w_1)=5$.  (For clarity,
we omit the matching edges incident to other colors at $w_1$, since they are irrelevant.)

From the figure, we see that $L_1(w_n)=\{1,2,4,6\}$, $L_2(w_n)=\{1,3,4,5\}$, and $L_3(w_n)=\{2,3,5,6\}$.
For brevity, henceforth we shorten $L_j(w_n)$ to $L_j$.  In the proof of the Key Lemma, we assume by symmetry 
that these lists are $\{1,2,3,4\}$, $\{1,2,5,6\}$, and $\{3,4,5,6\}$.  So we need a permutation $\pi$ to 
justify this assumption.  Let $\pi:=(1)(2364)(5)$ and let $\overline{L_j}:=\{\pi(h):h\in L_j\}$.  Note that 
$\overline{L_1}=\{1,2,3,4\}$, $\overline{L_2}=\{1,2,5,6\}$, and $\overline{L_3}=\{3,4,5,6\}$, as desired. 

The vector of colors forbidden by $w_1$ from use on $w_n$ for colorings $\vph_1,\vph_2,\vph_3$ is $(4,1,6)$.
But we are actually interested in $(\pi(4),\pi(1),\pi(6))=(2,1,4)$.  We look for a corresponding line in
\Cref{table1}, but do not immediately see one.  So we need a permutation that swaps one or more of the pairs
$1,2$ and $3,4$ and $5,6$.  Let $\rho:=(12)(34)(5)(6)$.  Now $(\rho(2),\rho(1),\rho(4))=(1,2,3)$, which is
the first line of the table.  (Note that if we let $\overline{\overline{L_j}}:=\{\rho(h):h\in \overline{L_j}\}$,
then $\overline{\overline{L_j}}=\overline{L_j}$ for all $j\in [3]$.) Based on the table, we let 
$\overline{\overline{R_1}} :=\{2,3\}$, $\overline{\overline{R_2}}:=\{1,5\}$, 
$\overline{\overline{R_3}}:=\{4,6\}$.  Now let $\overline{R_j}:=\{\rho^{-1}(h):h\in 
\overline{\overline{R_j}}\}$.  So $\overline{R_1}=\{1,4\}$, $\overline{R_2}=\{2,5\}$, 
$\overline{R_3}=\{3,6\}$.  Finally, let $R_j:=\{\pi^{-1}(h):h\in\overline{R_j}\}$.  
So $R_1=\{1,6\}$, $R_2=\{4,5\}$, $R_3=\{2,3\}$.  Finally, we follow the matching edges from $R_1$, $R_2$, 
$R_3$ to see what colors are excluded from each $L_j$ at each neighbor of $w_n$, when by the induction 
hypothesis we find the desired disjoint $\MM$-colorings for $G-w_n$.
\end{example}

To finish this section, we prove part (ii) of our Main Theorem.

\begin{lem}\label{lem:construction_K3t}
	\label{part-ii-lem}
	Let $G:=K_{3,120^3}$.  There exists a correspondence 5-cover $\MM$ for $G$ such that $G$ does not
	have 3 disjoint $\MM$-colorings.  This proves part (ii) of the Main Theorem, 
	since $K_{3,120^3}\in \K_5$.
	
\end{lem}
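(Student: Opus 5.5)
Let the part of size 3 be $A=\{a_1,a_2,a_3\}$ and the large part be $B$, with $|B|=120^3$. First note that $120=5!$ is the number of permutations of $[5]$. Any three disjoint $\MM$-colorings, restricted to $A$, are determined by the ordered triples $(\vph_1(a_i),\vph_2(a_i),\vph_3(a_i))$ of distinct colors, one triple for each $i\in[3]$. The plan is to index the vertices of $B$ by triples $(\sigma_1,\sigma_2,\sigma_3)$ of permutations of $[5]$; there are exactly $120^3$ such triples. For the vertex $b=b_{\sigma_1\sigma_2\sigma_3}$ we define the matching $\MM_{a_ib}$ by $k_{a_i}\mapsto \sigma_i(k)_b$. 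Then we show that for every precoloring of $A$ by three disjoint colorings, some $b$ cannot be completed. Since $K_{3,t}$ is $K_5$-minor-free (it has only 3 vertices on one side, so any $K_5$ minor would need two branch sets avoiding $A$, hence each a single vertex of $B$, and these are nonadjacent), this yields part (ii).

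Fix disjoint colorings on $A$. For each $j\in[3]$, coloring $\vph_j$ forbids at $b$ the set $F_j=\{\sigma_i(\vph_j(a_i)):i\in[3]\}$, which has at most 3 colors. We need three pairwise distinct colors $c_j\notin F_j$. To block this, we choose $\sigma_1,\sigma_2,\sigma_3$ so that the forbidden sets are as destructive as possible. The natural target is to make $F_1=F_2=F_3=\{1,2,3\}$. Then only the colors $\{4,5\}$ remain available to all three colorings, and two colors cannot serve three distinct values, so we get a contradiction via Hall.

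To achieve this target, for each $i$ we pick $\sigma_i$ mapping the triple $\vph_1(a_i),\vph_2(a_i),\vph_3(a_i)$ to $1,2,3$ in some order. We need, for each $j$, that $\{\sigma_i(\vph_j(a_i))\}_i$ equals $\{1,2,3\}$. We arrange this as a Latin square: set $\sigma_i(\vph_j(a_i))=j+i \pmod 3$, taking values in $\{1,2,3\}$. This is possible precisely because the three values $\vph_1(a_i),\vph_2(a_i),\vph_3(a_i)$ are distinct, and we extend $\sigma_i$ arbitrarily to a permutation of $[5]$. Then for each $j$, as $i$ ranges over $[3]$ the values $j+i$ cover $\{1,2,3\}$, so $F_j=\{1,2,3\}$ for every $j$. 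Since every choice of $(\sigma_1,\sigma_2,\sigma_3)$ appears as a vertex of $B$, this bad vertex exists for every precoloring of $A$.

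The main obstacle is only making sure the indexing covers every possible precoloring. It does so trivially, because we included all $120^3$ permutation triples, which also explains the size of $B$. It also remains to check the definitions: each matching is perfect, and the bad vertex $b$ then has at most 2 admissible colors for each of $\vph_1,\vph_2,\vph_3$, all lying in $\{4,5\}$. Hence no three disjoint $\MM$-colorings of $G$ exist.
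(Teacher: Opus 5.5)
Your proposal is correct and follows essentially the same argument as the paper. The paper also indexes the large side by all $120^3$ triples of perfect matchings and picks the vertex whose matchings send the colors of $\vph_1,\vph_2,\vph_3$ on the small side to $\{1,2,3\}$ in cyclically shifted orders (your Latin-square choice $j\mapsto j+i \pmod 3$), which forces all three colorings into $\{4,5\}$ at that vertex; your $K_5$-minor-freeness argument is likewise the same as the paper's.
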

\begin{proof}
	Denote the parts of $G$ by $U$ and $W$, where $U=\{u_1,\ldots,u_{120^3}\}$ and $W=\{x,y,z\}$.
	We first prove the second statement.  If $G$ contains a $K_5$-minor, then $V(G)$ contains
	disjoint subsets $V_1,\ldots,V_5$ that when contracted give $V(K_5)$.
	These sets are disjoint, so at least two contain no vertex of $W$.  Thus their corresponding
	vertices in the contraction are non-adjacent, a contradiction.

	Now we prove the first statement.
	For each $v\in V(G)$, denote by $[5]_v$ the set $\{1_v,\ldots,5_v\}$ of allowable colors for $v$.
	Let $\mathcal{M}$ denote the set of all $5!$ possible perfect matchings between $[5]$ and $[5]$.
	We now build our 5-cover $\MM$ of $G$.
	For each triple $(M_x,M_y,M_z) \in \mathcal{M}\times\mathcal{M}\times\mathcal{M}$, choose a unique 
	vertex $u_i$ in $U$, and include matching $M_x$ between $[5]_x$ and $[5]_{u_i}$, 
	matching $M_y$ between $[5]_y$ and $[5]_{u_i}$, and matching $M_z$ between $[5]_z$ and $[5]_{u_i}$.

	Suppose, for a contradiction, that $G$ admits 3 disjoint $\MM$-colourings 
	$\vph_1, \vph_2, \vph_3$.  Here each $\vph_i$ is a function from $V(G)$ to [5].
	Let
	$N_x := \{(\vph_1(x),1)$, $(\vph_2(x),2),(\vph_3(x),3)\}$ and
	$N_y := \{(\vph_1(y),3)$, $(\vph_2(y),1),(\vph_3(y),2)\}$ and
	$N_z := \{(\vph_1(z),2)$, $(\vph_2(z),3),(\vph_3(z),1)\}$.
	Choose a vertex $u_i$ such that the matchings between 
	$x$ and $u_i$, $y$ and $u_i$, and $z$ and $u_i$ are $N_x, N_y, N_z$.

Due to the three cover-edges in the first coordinate, we must have $\vph_1(u_i) \in \{4,5\}$.
Due to the second coordinate, $\vph_2(u_i) \in \{4,5\}$; and due to the third coordinate, $\vph_3(u_i) \in \{4,5\}$.  Therefore, $\vph_1(u_i),\vph_2(u_i),\vph_3(u_i)$ cannot be disjoint, a contradiction. 
\end{proof}

We observe that \Cref{part-ii-lem} is sharp as follows.  For every correspondence 5-cover $\MM$, and every
$\MM$-coloring $\vph_1$, there exists a disjoint $\MM$-coloring $\vph_2$.  This holds for every graph
$K_{3,r}$ by induction on $r$, using an argument similar to (but simpler than) that proving \Cref{ext-lem}.
In fact, the same is true of every graph that is 3-degenerate.
Furthermore, Lemma~\ref{lem:construction_K3t} should also hold for $K_{3,t}$ with $t$ much smaller than $120^3$, and the ideas in~\cite{CH23} should help to achieve such an optimization. 
\bigskip

Motivated by Hadwiger's Conjecture, we propose proving better bounds on $\chisc(\K_s)$ for all $s\ge 6$.
Determining such values precisely for general $s$ seems hard.  (In particular, if they are linear in $s$,
then a proof would imply the famous Linear Hadwiger Conjecture: There exists a constant $a$ such that if
$G$ is $K_s$-minor-free, then $\chi(G)\le as$.) But perhaps the values can be determined, or bounded tightly,
for some further small values of $s$.

In this direction, we mention that $\chisc(\K_s)>2s-5$ for all integers $s$.  Specifically, this is 
witnessed by the complete bipartite graph $\K_{s-2,r}$ when we let $r:=((2s-5)!)^{s-2}$.  To construct our 
$(2s-5)$-cover $\MM$, we generalize the approach in the proof of \Cref{part-ii-lem}, taking all possible 
ordered sets of matchings from the small side to the big side.  It was previously 
observed~\cite[Proposition~24]{CCvBDK} that this cover does not admit $2s-5$ disjoint $\MM$-colorings. 
In fact, the same argument in the previous proof shows that
it does not even contain $s-2$ disjoint $\MM$-colorings.
\bigskip

We end with a question.
\begin{ques}\label{question_twodisjoint}
Let $G$ be a $K_5$-minor-free graph. Does every $5$-cover $\MM$ of $G$
admit two disjoint $\MM$-colorings?
\end{ques}
If the answer to Question~\ref{question_twodisjoint} is yes, then it directly implies the first part of our Main Theorem, as from a $6$-cover $\MM$ one can choose an arbitrary $\MM$-coloring $\vph$, construct a $5$-cover $\MM_2$ from $\MM$ by removing $\vph(v)$ for every vertex $v$, and then find two disjoint $\MM_2$-colorings. On the other hand, if the answer is no, then it will underscore that $6$-covers are essential to obtain three disjoint colorings.

\footnotesize{
\bibliographystyle{habbrv}
\bibliography{thomassen}
}

\end{document}